\theoremstyle{definition}
\newtheorem{lemma}{Lemma}[section]
\newtheorem{theorem}{Theorem}[section]
\newtheorem{proposition}{Proposition}[section]
\newtheorem{remark}{Remark}[section]
\definecolor{Myred}{cmyk}{0.0,1.0,1.0,0.00}
\begin{document}

\title[Spectral estimates for Dirichlet Laplacian]{Spectral estimates for Dirichlet Laplacian on tubes with exploding twisting velocity}

\author{ Diana Barseghyan,  Andrii Khrabustovskyi}

\address{Diana Barseghyan\\ 
Department of Theoretical Physics, Nuclear Physics Institute of the Czech Academy of Sciences\\ 
25068 \v{R}e\v{z} near Prague, Czech Republic \&\\ 
Department of Mathematics, University of Ostrava\\
30. dubna 22, 70200 Ostrava, Czech Republic
\\ \email{diana.barseghyan@osu.cz}}

\address{Andrii Khrabustovskyi\\ 
Institute of Applied Mathematics, Graz University of Technology,\\Steyrergasse 30, 8010 Graz, Austria
\\\email{khrabustovskyi@math.tugraz.at}}

\CorrespondingAuthor{Diana Barseghyan}

\date{\today}

\keywords{Dirichlet Laplacian, twisted tube, discrete spectrum, eigenvalue bounds}

\subjclass{35P20, 35P15, 81Q10, 81Q37}

\begin{abstract}
We study the spectrum of the Dirichlet Laplacian on an unbounded twisted  tube with twisting velocity exploding to infinity. If the tube cross section does not intersect the axis of rotation, then its spectrum is purely discrete under some additional conditions on the twisting velocity (D.~Krej\v{c}i\v{r}\'{i}k, 2015). 
In the current work we prove a Berezin type upper bound  for the eigenvalue moments.\end{abstract}

\maketitle

\section{Introduction} 
\label{s:intro}

Advances in mesoscopic physics have given rise to study spectral properties of unbounded regions of tubular shape.  The Dirichlet Laplacian in such domains is a reasonable model for the Hamiltonian in quantum-waveguide nanostructures.  
One of the peculiarities of such domains is that they may possess geometrically-induced bound states, which was noticed first in the two-dimensional situation by P.~Exner and P.~\v{S}eba \cite{ES89}, and studied intensively since then,  see, e.g., the papers \cite{CDFK05,DE95,GJ92,RB95} and the recent monograph \cite{EK15}. 

In the above mentioned papers bound states are generated by a local \emph{bending} of a straight waveguide. In the present work we deal with another class of unbounded tubular domains -- the so-called \emph{twisted tubes}. 

\emph{Twisted tube} is a set which is obtained by translating and rotating a bounded open connected set $\omega \subset \mathbb{R}^2$ about a straight line in $\mathbb{R}^3$. More precisely, for a given $x_1\in\mathbb{R}$ and $x:=(x_2, x_3)\in\omega$ we define the mapping $\mathfrak{L}:\: \mathbb{R}\times\omega\to\mathbb{R}^3$ by
\begin{equation}\label{L} 
\mathfrak{L}(x_1, x)=(x_1, x_2\cos\theta(x_1)+x_3\sin\theta(x_1), x_3\cos\theta(x_1)-x_2\sin\theta(x_1))\,.
\end{equation}
Here $\theta: \mathbb{R} \to \mathbb{R}$ is the \emph{rotation angle}   which is assumed to be a sufficiently regular function.
Then the region $\Omega:=\mathfrak{L}(\mathbb{R} \times \omega)\subset\mathbb{R}^3$ is a twisted tube  unless the function $\theta$ is constant or $\omega$ is rotationally symmetric with respect to the origin in $\mathbb{R}^2$ (i.e., either a
disk or an annulus with the center at the origin). 

In what follows for $\Omega\subset \mathbb{R}^n$, $n\ge1$, we denote by $-\Delta_D^\Omega$  the Dirichlet Laplacian in $L^2(\Omega)$. If $\Omega$ is bounded,  the spectrum of $-\Delta_D^\Omega$ is purely discrete. However, for unbounded domains  the discreteness of the spectrum is no longer guaranteed. A necessary condition is the so called \emph{quasi-boundedness} of $\Omega$ (see, e.g., \cite{EE87}) which is satisfied, by definition, if $\lim_{ {x\in\Omega},\, {|x|\rightarrow\infty}} \mathrm{dist}(x,\partial\Omega)=0\,.$

It is easy to see that the twisted tube $\Omega$ is 
not a quasi-bounded domain if the cross-section $\omega$
contains the origin in $\mathbb{R}^2$. Consequently, in this case
the essential spectrum of $-\Delta_D^\Omega$ is non-empty.
For example, if $\dot{\theta}$ vanishes at infinity then \cite{K08}
$\sigma(-\Delta_D^\Omega)=[\lambda_1, \infty),$ 
where $\lambda_1$ is the first eigenvalue of $-\Delta_D^\omega$ in $L^2(\omega)$. Another interesting example was treated in \cite{EK05}:   $\dot{\theta}(x_1)$ is a constant (we denote it $\beta$). In this case  
$
\sigma(-\Delta_D^\Omega)=[\lambda_1(\beta), \infty),
$
where $\lambda_1(\beta)$ is  the spectral  threshold of the two-dimensional operator $-\Delta_D^\omega-\beta^2 \partial_\tau ^2$ in $L^2(\omega)$,   $\partial_\tau$ is the angular momentum operator  $\partial_\tau:=x_3 \partial_2-x_2 \partial _3.$
Of course, if $\omega$ contains the origin it does not mean
that the spectrum is purely essential: for instance, 
if we perturb locally a twisted tube with constant $\dot\theta(x_1)=\beta$, then eigenvalues may appear below $\lambda_1(\beta)$, see \cite{EK05} for more details.

\begin{figure}[t]
\begin{center}
\includegraphics[width=80mm]{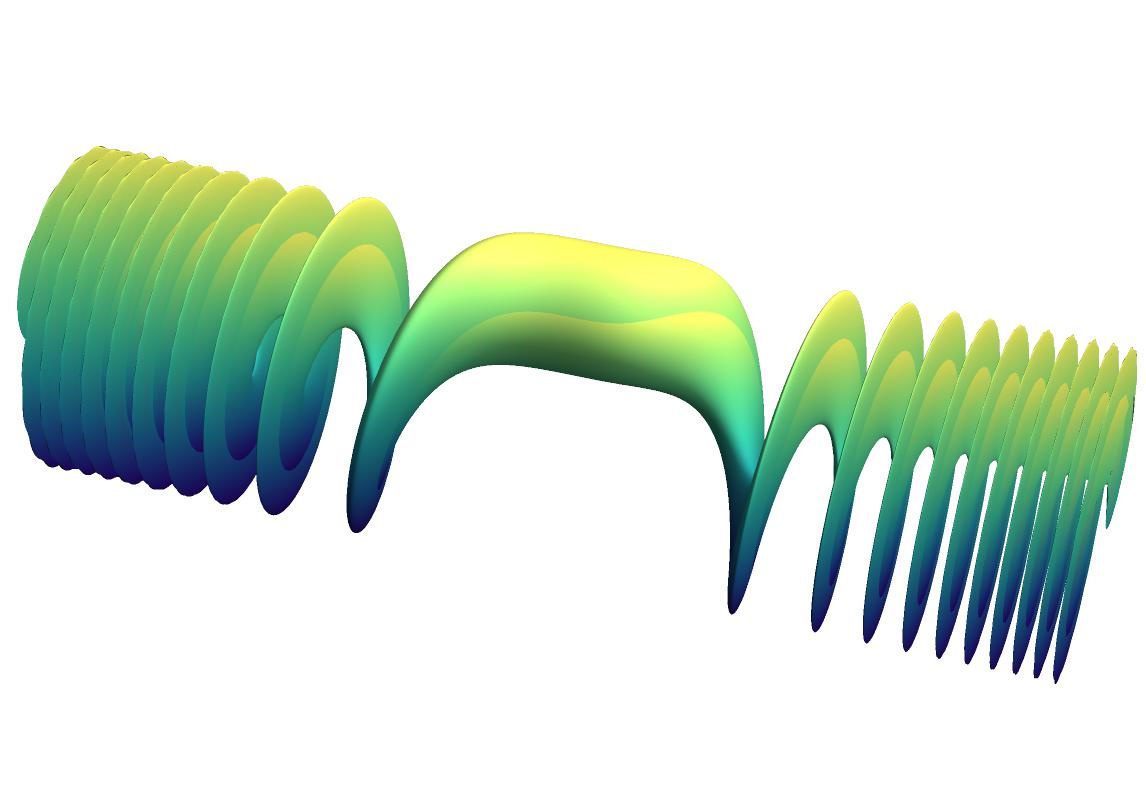}
\caption{The twisted tube $\Omega$}\label{figure1}
\end{center}
\end{figure}

The picture changes drastically if 
\begin{gather} 
\label{halfspace}
\omega\subset \{(x_2, x_3)\in \mathbb{R}^2 | \,x_2 >0\}.
\end{gather}
The corresponding twisted tube is depicted on Figure~\ref{figure1}.
In this case it turns out that  $\sigma(-\Delta_D^\Omega)$ is  purely discrete provided 
\begin{gather} \label{3condition}
\mathrm{lim}_{x_1\to\pm\infty} |\dot{\theta}(x_1)| = \infty,
\end{gather} and therefore $\Omega$ becomes quasi-bounded  \cite{K15}.

In the present note we study  some properties of
the discrete eigenvalues in the model considered in \cite{K15}. Our main result is the Berezin type bound for 
eigenvalue moments of order $\sigma\geq 0$. 

Recall, that the classical Berezin bound is the  estimate from above for the moments of eigenvalues of the Dirichlet Laplacian $-\Delta_D^\Omega $ on a \emph{bounded} domain $\Omega$ lying below a fixed $\Lambda>0$ \cite{B67}: 
\begin{equation}\label{classical-berezin}\mathrm{tr}(-\Delta_D^\Omega-\Lambda)_-^\sigma:=\sum_k(\lambda_k-\Lambda)_-^\sigma\le L_{\sigma, d}^{\mathrm{cl}} |\Omega| \Lambda^{\sigma+d/2},\,\,\sigma\ge 1.
\end{equation}
Here  $\{\lambda_k\}_{k\in\mathbb{N}}$ is 
 a sequence of eigenvalues of  $-\Delta_D^\Omega$, numbered in the ascending order with account of their multiplicities, $|\Omega|$ stands for the measure of $\Omega$, $L_{\sigma, d}^{\mathrm{cl}}$ is the so-called semiclassical constant given by 
\begin{equation}\label{semiclassical constant}L_{\sigma, d}^{\mathrm{cl}}=\frac{\Gamma(\sigma+1)}{(4\pi)^{d/2} \Gamma(\sigma+1+d/2)},
\end{equation}
and, finally, $(\cdot)_-$ is the negative part of the enclosed quantity (cf.~\eqref{pm}).
A similar inequality  holds also for $0\le\sigma<1$
with some, probably non-sharp, constant instead of  $L_{\sigma, d}^{\mathrm{cl}}$ \cite{L97}.

Unfortunately, for tubular domains we consider in the current work 
these estimates are meaningless since
their right-hand sides become infinite due to $|\Omega|=\infty$.
Nevertheless, we are able to derive a Berezin type bound
for twisted tubes whose rotation velocity explodes at infinity (see \eqref{3condition})
 and  additional technical assumptions \eqref{1condition}-\eqref{2condition} hold (see also Subsection~\ref{other}). 
The role of $|\Omega|\Lambda^{\sigma+3/2}$ will be played by $|\omega|$ times   certain expression involving $\Lambda$, $\theta(x_1)$ and $\dot\theta(x_1)$.  
 
Eigenvalue bounds for twisted tubes were also treated by P.~Exner and the first author in \cite{EB14}, where a locally  perturbed  twisted tube with constant rotation velocity  was considered.  
The authors derived Lieb-Thirring-type inequalities for eigenvalue moments of order $\sigma>1/2$. 
Other spectral aspects of  twisted tubes  were
treated in \cite{Gr04,Gr05} (existence/non-existence of bound states),   
\cite{BHK15,EKK08} (Hardy type inequalities), \cite{BMT07} (asymptotic behavior of the spectrum  as the thickness of the tube cross section goes to zero), \cite{BKRS09} (eigenvalue asymptotics in the case when the rotation velocity decays slowly at infinity).

The paper is organized as follows. In Section~\ref{sec-main} we 
present our main result (Theorem~\ref{th1}). Its proof  is  given in Section~\ref{sec-proof}. Finally, in Section~\ref{discussion} we discuss the obtained result.

\section{Main result\label{sec-main}} 

Recall, that we are given with the domain $\Omega:=\mathfrak{L}(\mathbb{R} \times \omega)\subset\mathbb{R}^3$, where $\mathfrak{L}$ is defined by \eqref{L} and the domain $\omega\subset\mathbb{R}^2$ satisfies \eqref{halfspace}. The rotational angle $\theta(x_1)$ is assumed 
to be  a  continuously differentiable function
 satisfying condition \eqref{3condition}. Additionally, we assume that 
\begin{gather}
\label{1condition}
\dot{\theta}(x_1) \text{ is a monotonically increasing function},
\\\label{2condition}
\dot{\theta}(x_1)\ge0 \text{ on }  \mathbb{R}_+,\quad \dot{\theta}(x_1)\le0 \text{ on } \mathbb{R}_-
\end{gather} 
(for example, one can choose $\theta(x_1)=\sum_{k=0}^m A_k x_1^{2k}$ with $m\in \mathbb{N}$, $A_k\ge 0$, $A_m\not=0$). Another functions also can be treated, see~Subsection~\ref{other}. Note, that 
(\ref{3condition}), (\ref{2condition}) imply
\begin{gather*}
\lim_{|x_1|\to\infty}\theta(x_1)=\infty.
\end{gather*}

We set for $\alpha\in \theta[0, \infty),\,\,\beta\in \theta (-\infty, 0]$:
$$\theta_+^{-1}(\alpha):=\{z\ge0: \theta(z)=\alpha\},\quad \theta_-^{-1}(\beta):=\{z\le0: \theta(z)=\beta\}.$$ 

In what follows 
 for $z\in \mathbb{R}$ we denote 
\begin{gather}\label{pm} 
(z)_\pm:=|z\pm |z||/2 
\end{gather}
(i.e., the negative and positive parts of $z$). \medskip

Our main result is the following theorem.
\begin{theorem} \label{th1}
Let $\sigma\ge 0$.
Under the above assumptions on $\theta$ and $\omega$, for any  $0<\varepsilon<1$ and $\Lambda\ge0$ the following inequality holds true, 
\begin{equation}\label{mainineq}\mathrm{tr}\left(-\Delta_D^\Omega-\Lambda\right)_-^\sigma\le
\frac{L_\sigma}{(1-\varepsilon)^{3/2}} |\omega| \int_{\mathbb{R}}\,\left(\varepsilon f(x_1)-\Lambda\right)_-^{\sigma+3/2}\, \mathrm{d}x_1, 
\end{equation}
where 
\begin{multline}\label{f} 
f(x_1)=\left(\dot{\theta}\left(\theta^{-1}_+ (\theta(x_1)-\pi)\right)\right)^2\,\chi_{\{x_1\ge\theta_+^{-1}(\theta(0)+2\pi)\}}(x_1)  \\ +\left(\dot{\theta}\left(\theta_-^{-1}(\theta(x_1)-\pi)\right)\right)^2\,\chi_{\{x_1\le\theta_-^{-1}(\theta(0)+2\pi)\}}(x_1),
\end{multline} 
and $L_\sigma$ is a constant depending on $\sigma$. For $\sigma\ge 3/2$
\eqref{mainineq} is valid with 
 $L_\sigma=L_{\sigma,3}^{\mathrm{cl}}$, where $L_{\sigma,3}^{\mathrm{cl}}$ is given by (\ref{semiclassical constant}).
\end{theorem}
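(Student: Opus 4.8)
The plan is to reduce the three-dimensional Dirichlet Laplacian on the twisted tube $\Omega$ to a one-dimensional family of operators by exploiting the explicit diffeomorphism $\mathfrak{L}$, and then to bound the eigenvalue moments fiberwise using a variational (bracketing) argument. First I would pass to the ``straightened'' coordinates $(x_1,x)\in\mathbb{R}\times\omega$: pulling $-\Delta_D^\Omega$ back through $\mathfrak{L}$ yields a unitarily equivalent operator $H$ on $L^2(\mathbb{R}\times\omega)$ whose quadratic form is
\begin{equation*}
h[u]=\int_{\mathbb{R}\times\omega}\bigl(|\partial_{x_1}u+\dot\theta\,\partial_\tau u|^2+|\nabla_x u|^2\bigr)\,\mathrm{d}x_1\,\mathrm{d}x,
\end{equation*}
with form domain $H_0^1$; here $\partial_\tau=x_3\partial_2-x_2\partial_3$ is the angular-momentum operator and $\dot\theta=\dot\theta(x_1)$. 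This is the standard twisted-tube representation (cf.~\cite{EK05,K15}). For the moments I would use the fact, which I may assume as a general principle, that $\operatorname{tr}(H-\Lambda)_-^\sigma$ is monotone under relaxation of the boundary conditions and under a lower bound $H\ge \widetilde H$; so it suffices to bound the moments of a suitable \emph{lower} bound for $H$ which decouples in $x_1$.

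The key mechanism is that the angular-momentum term provides an effective potential that grows with $\dot\theta$ but only on the part of $\omega$ that has rotated sufficiently far. Because of the half-space assumption \eqref{halfspace} and the monotonicity \eqref{1condition}--\eqref{2condition}, on each interval of $x_1$ over which $\theta$ increases by roughly $\pi$ the cross-section sweeps through a half-turn, and on a fixed sub-region of $\omega$ one gets $|x_2\cos\theta+x_3\sin\theta|\ge c>0$ uniformly, hence $|\partial_\tau u|$ is comparable to a genuine derivative in a direction that has $\dot\theta$-weight. Quantitatively I would chop $\mathbb{R}$ at the points where $\theta$ takes the values $\theta(0)+2\pi k$, impose Neumann conditions on the cross-sectional slices at those points (this only decreases the moments), and on each resulting slab bound the form from below by $\varepsilon\,\dot\theta(\xi_-)^2$ times $\|u\|^2$ plus the one-dimensional kinetic term $|\partial_{x_1}u|^2$ and the transverse Dirichlet form on $\omega$, where $\xi_-$ is the left endpoint of the slab and the factor $(1-\varepsilon)$ is the price paid for discarding a fraction of the gradient to absorb cross terms via Cauchy--Schwarz. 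Taking $\xi_-$ in terms of $\theta^{-1}_\pm(\theta(x_1)-\pi)$ and summing the slabs reassembles exactly the function $f$ of \eqref{f} and the factor $(1-\varepsilon)^{-3/2}$.

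At this point the problem is genuinely one-dimensional: the operator to be estimated is (unitarily, after separating the transverse Dirichlet Laplacian whose bottom eigenvalue $\lambda_1$ is $\ge 0$, so can be dropped for a lower bound) of the form $-\frac{d^2}{dx_1^2}+\varepsilon f(x_1)$ on $L^2(\mathbb{R})$, tensored with $L^2(\omega)$, and $\operatorname{tr}(-\frac{d^2}{dx_1^2}+\varepsilon f-\Lambda)_-^\sigma$ is controlled by the one-dimensional Lieb--Thirring / Berezin--Li--Yau inequality. For $\sigma\ge 3/2$ the sharp constant $L^{\mathrm{cl}}_{\sigma,1}$ holds, and the $|\omega|$ factor enters because the transverse part contributes a multiplicity that, combined over all transverse modes, is bounded by the two-dimensional Weyl count $L^{\mathrm{cl}}_{\cdot,2}|\omega|(\cdots)$ — more cleanly, one invokes the sharp Lieb--Thirring bound in dimension $3$ directly for $-\Delta + V(x_1)$ on $\mathbb{R}\times\omega$ with Dirichlet conditions, writing $\operatorname{tr}(-\Delta_D^{\mathbb{R}\times\omega}+\varepsilon f-\Lambda)_-^\sigma\le L_{\sigma,3}^{\mathrm{cl}}\int_{\mathbb{R}\times\omega}(\varepsilon f-\Lambda)_-^{\sigma+3/2}$ and performing the trivial $x$-integration to produce $|\omega|$. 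For $0\le\sigma<3/2$ the same scheme works with the (possibly non-sharp) Lieb--Thirring constant from \cite{L97}, giving the unspecified $L_\sigma$.

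The main obstacle, and the step that requires real care rather than routine computation, is the geometric lower bound in the second paragraph: showing that after restricting to an appropriate fixed sub-region of $\omega$ and to a slab in $x_1$ over which $\theta$ has increased by $\pi$, the angular-momentum form $\int|\partial_\tau u|^2$ is bounded below by a constant (times $\dot\theta$ evaluated at the right reference point, using monotonicity) times $\|u\|^2$, with the correct bookkeeping of which slabs ``see'' which value of $\dot\theta$. This is where the condition \eqref{halfspace} is essential (it guarantees $x_2>0$ so the rotated first coordinate cannot vanish throughout a half-turn) and where the precise shifts by $\pi$ and by $\theta(0)+2\pi$ in the definition \eqref{f} of $f$ come from; getting the cutoffs $\chi_{\{x_1\ge\theta_+^{-1}(\theta(0)+2\pi)\}}$ to line up with the slab decomposition, and handling the cross term $2\operatorname{Re}\langle\partial_{x_1}u,\dot\theta\,\partial_\tau u\rangle$ (which is the source of both the $\varepsilon$ and the $(1-\varepsilon)$), is the technical heart of the proof.
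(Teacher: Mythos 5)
Your proposal takes a genuinely different route from the paper, and the step you yourself identify as ``the technical heart'' --- extracting the effective potential $\varepsilon\dot\theta^2$ from the angular-momentum term in the straightened form $\int(|\partial_{x_1}u+\dot\theta\,\partial_\tau u|^2+|\nabla_x u|^2)$ --- is precisely where the argument has a genuine gap. To keep a positive multiple of $\dot\theta^2\|\partial_\tau u\|^2$ out of $\|\partial_{x_1}u+\dot\theta\,\partial_\tau u\|^2$ via Cauchy--Schwarz/Young you must write $\|a+b\|^2\ge(1-\delta)\|a\|^2+(1-\delta^{-1})\|b\|^2$ with $\delta>1$, which forces you to \emph{subtract} a multiple of $\|\partial_{x_1}u\|^2$ that the form does not control; the trial function with $\partial_{x_1}u\approx-\dot\theta\,\partial_\tau u$ shows the cross term cannot be absorbed naively. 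Making this work is essentially the content of \cite{K15} and requires a separate nontrivial lemma (a quantitative lower bound on the fiber operators $-\Delta_D^\omega-\beta^2\partial_\tau^2$), which you do not supply; moreover it is unclear that this route would produce the specific function $f$ of \eqref{f}, whose argument is shifted by $\pi$ through $\theta_\pm^{-1}$, rather than some slab-endpoint value of $\dot\theta$. Two further slips: imposing Neumann conditions at the slab interfaces \emph{lowers} the operator and hence \emph{increases} (not decreases) the moments --- the direction still works for an upper bound, but your parenthetical is backwards --- and after Neumann decoupling you can no longer invoke the whole-space Lieb--Thirring inequality directly as you do in the last paragraph.

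The paper's mechanism is much more elementary and never leaves the Cartesian coordinates of $\mathbb{R}^3$: for a fixed transverse point $x=(x_2,x_3)$ the fiber $\omega_x=\{x_1:(x_1,x)\in\Omega\}$ is a union of intervals $(a_k(x),b_k(x))$, and the half-space condition \eqref{halfspace} forces the cross-section to turn by at most $\pi$ while the point stays inside, so $b_k-a_k\le\pi/\dot\theta(a_k)$ by the mean value theorem. The one-dimensional Friedrichs inequality on each such short interval, applied to the plain longitudinal kinetic term $|\partial_{x_1}u|^2$, then yields the uniform bound $\int_\Omega|\partial_{x_1}u|^2\ge\int_\Omega f(x_1)|u|^2$ (this is the paper's Lemma~\ref{lemma-est}, and the bookkeeping of which interval sees which value of $\dot\theta$ is exactly where the shifts by $\pi$ and the cutoff at $\theta_+^{-1}(\theta(0)+2\pi)$ in \eqref{f} arise). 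After splitting off a fraction $\varepsilon$ of $|\partial_{x_1}u|^2$ to create the potential, one extends $u$ and the potential by zero to all of $\mathbb{R}^3$ (Dirichlet bracketing against $-\Delta_D^{\widehat\Omega}\ge0$) and applies the three-dimensional Lieb--Thirring/Cwikel--Lieb--Rozenblum inequality; the factor $|\omega|$ appears because each slice $\{x_1=\mathrm{const}\}\cap\Omega$ is a rotated copy of $\omega$. No change of variables, no angular-momentum operator, and no slab decomposition are needed. As submitted, your proposal does not constitute a proof of Theorem~\ref{th1}.
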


\begin{remark}\label{finiteness}
It follows easily from the assumptions on the function $\theta(x_1)$  that
$$f(x_1)\to \infty\text{ as }|x_1|\to \infty,$$ which implies the finiteness of the integral  at the right-hand-side of \eqref{mainineq}.
\end{remark}

\section{Proof of Theorem~\ref{th1} \label{sec-proof}}
  
We fix a point $x=(x_2, x_3)\subset\mathbb{R}^2$   and denote  $$\omega_x=\{x_1\in \mathbb{R}: (x_1, x_2, x_3)\in\Omega\}.$$ 
It is easy to see that $\omega_x$ is either the empty set or
a sequence of segments $(a_k (x), b_k (x))_{k=-\infty}^\infty$
satisfying 
\begin{gather*}
a_k(x)<b_k(x)<a_{k+1}(x),\ \forall k\in\mathbb{Z},\\
a_k(x) \to \pm\infty\text{ as }k\to\pm\infty.
\end{gather*}
We assume that these intervals are renumbered in such a way that
\begin{gather*}
b_{-1}(x)<0,\ a_1(x)>0. 
\end{gather*}

In what follows we use the same notation for  $u\in \mathcal{H}_0^1(\Omega)$ and
its extension by zero to the whole $\mathbb{R}^3$ (the resulting function will belong to $\mathcal{H}^1(\mathbb{R}^3)$).

\begin{lemma} \label{lemma-est}
For each $u\in\mathcal{H}_0^1(\Omega)$
\begin{equation}\label{int}\int_{\Omega}\left|\frac{\partial u}{\partial x_1}(x_1, x)\right|^2\,\mathrm{d}x_1\,\mathrm{d}x
\ge\int_{\Omega}f(x_1)|u|^2\,\mathrm{d}x_1\,\mathrm{d}x,
\end{equation}
where $f(x_1)$ is defined by \eqref{f}.
 \end{lemma}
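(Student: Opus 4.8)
The plan is to localise \eqref{int} fibre by fibre over the cross-section and then integrate. Since the zero extension of $u$ belongs to $\mathcal H^1(\mathbb R^3)$, Fubini's theorem for Sobolev functions shows that for almost every $x=(x_2,x_3)$ the section $u(\cdot,x)$ lies in $\mathcal H^1(\mathbb R)$ with $\partial_{x_1}u$ as its weak derivative, and its continuous representative vanishes on $\mathbb R\setminus\omega_x$, hence by continuity at the endpoints of each component; consequently its restriction to each component $(a_k,b_k):=(a_k(x),b_k(x))$ of $\omega_x$ belongs to $\mathcal H^1_0(a_k,b_k)$. Since moreover $\int_{\Omega}g\,\mathrm{d}x_1\,\mathrm{d}x=\int_{\mathbb R^2}\big(\sum_k\int_{a_k}^{b_k}g\,\mathrm{d}x_1\big)\mathrm{d}x$, it is enough to prove, for a.e.\ $x$ and every $k$, the fibrewise inequality
\begin{equation}\label{plan-fibre}
\int_{a_k}^{b_k}\Big|\frac{\partial u}{\partial x_1}(x_1,x)\Big|^2\,\mathrm{d}x_1\ \ge\ \int_{a_k}^{b_k} f(x_1)\,|u(x_1,x)|^2\,\mathrm{d}x_1 .
\end{equation}
(If $\omega_x=\emptyset$ or $x=(0,0)$ then $u(\cdot,x)\equiv0$ and there is nothing to prove; otherwise the angular coordinate of $x$ is well defined.)

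The key geometric fact, and the only place where the half-space hypothesis \eqref{halfspace} enters, is the following. By \eqref{L}, $(x_1,x)\in\Omega$ if and only if $R_{\theta(x_1)}x\in\omega$, where $R_\alpha$ denotes the rotation of $\mathbb R^2$ about the origin by the angle $\alpha$. On a component $(a_k,b_k)$ the map $x_1\mapsto R_{\theta(x_1)}x$ sweeps a connected circular arc, centred at the origin, that stays inside $\omega\subset\{x_2>0\}$; since the bounding line $\{x_2=0\}$ passes through the centre, such an arc subtends an angle at most $\pi$. Using that $\theta$ is monotone on each of $\mathbb R_\pm$ (by \eqref{2condition}), we conclude that $|\theta(b_k)-\theta(a_k)|\le\pi$ whenever the component is contained in $\mathbb R_+$ or in $\mathbb R_-$, and that $\max\{\theta(a_k),\theta(b_k)\}\le\theta(0)+\pi$ whenever it straddles the origin. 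In the latter case the component is contained in $\big(\theta_-^{-1}(\theta(0)+2\pi),\ \theta_+^{-1}(\theta(0)+2\pi)\big)$, where both characteristic functions in \eqref{f} vanish, so $f\equiv0$ there and \eqref{plan-fibre} is trivial; every other component lies entirely in $\mathbb R_+$ or entirely in $\mathbb R_-$.

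Take a component $(a_k,b_k)\subset\mathbb R_+$ (the case $(a_k,b_k)\subset\mathbb R_-$ being symmetric, with $b_k$ as reference endpoint and $\dot\theta$ replaced by $|\dot\theta|$). Since $u(\cdot,x)\in\mathcal H^1_0(a_k,b_k)$, the sharp one-dimensional Dirichlet–Poincaré inequality gives $\int_{a_k}^{b_k}|\partial_{x_1}u|^2\ge(\pi/(b_k-a_k))^2\int_{a_k}^{b_k}|u|^2$. By \eqref{1condition} $\dot\theta$ is non-decreasing, hence
\begin{equation}\label{plan-length}
\pi\ \ge\ \theta(b_k)-\theta(a_k)\ =\ \int_{a_k}^{b_k}\dot\theta(s)\,\mathrm{d}s\ \ge\ \dot\theta(a_k)\,(b_k-a_k),
\end{equation}
so $(\pi/(b_k-a_k))^2\ge\dot\theta(a_k)^2$ and therefore $\int_{a_k}^{b_k}|\partial_{x_1}u|^2\ge\dot\theta(a_k)^2\int_{a_k}^{b_k}|u|^2$. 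It remains to verify $f(x_1)\le\dot\theta(a_k)^2$ on $(a_k,b_k)$. This is obvious where $f(x_1)=0$; otherwise $x_1\ge\theta_+^{-1}(\theta(0)+2\pi)$, which is exactly what forces $\theta(x_1)-\pi\ge\theta(0)+\pi\ge\theta(0)$, so that $z:=\theta_+^{-1}(\theta(x_1)-\pi)$ is a well-defined point of $[0,\infty)$ and $f(x_1)=\dot\theta(z)^2$. From $\theta(z)=\theta(x_1)-\pi\le\theta(b_k)-\pi\le\theta(a_k)$ we get $z\le a_k$ (or $\dot\theta(z)=0$, should $\theta$ be constant between $z$ and $a_k$), and then monotonicity of $\dot\theta$ on $\mathbb R_+$ together with \eqref{2condition} gives $0\le\dot\theta(z)\le\dot\theta(a_k)$, i.e.\ $f(x_1)\le\dot\theta(a_k)^2$. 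Combining the two displays yields \eqref{plan-fibre}; summing over $k$ and integrating over $x$ proves the lemma.

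The analytic ingredients — Fubini for $\mathcal H^1$, the one-dimensional Poincaré inequality, the monotonicity of $\dot\theta$ — are routine. I expect the only real difficulty to be the geometric claim that a connected piece of the tube lying over a fixed cross-sectional point corresponds to a rotation by at most $\pi$, together with the accompanying bookkeeping, which has to confirm that the shifts by $\pi$ and $2\pi$ hard-wired into $f$ keep the argument of $\dot\theta$ both non-negative and to the left of the component currently under consideration.
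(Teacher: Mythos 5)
Your proof is correct and follows essentially the same route as the paper: a fibrewise decomposition over the cross-section, the one-dimensional Friedrichs inequality on each component $(a_k(x),b_k(x))$, the geometric bound $\theta(b_k)-\theta(a_k)\le\pi$ coming from the half-space condition \eqref{halfspace}, and the monotonicity of $\dot\theta$ to convert this into $f(x_1)\le\dot\theta(a_k)^2$ on the component. The only (welcome) organisational difference is that you establish this pointwise majorisation on every component and let the indicator functions built into \eqref{f} annihilate the component straddling the origin, whereas the paper instead locates the first index $k_0$ with $\theta(a_{k_0})\ge\theta(0)+\pi$ and argues via the vanishing of $u$ on the gaps that the sum can be replaced by an integral over $[\theta_+^{-1}(\theta(0)+2\pi),\infty)$ --- the same estimate packaged with slightly heavier bookkeeping.
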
 

\begin{proof}
Let us fix $x=(x_2, x_3)\subset\mathbb{R}^2$.
Since  $u(a_k(x), x)=u(b_k(x), x)=0$  one has the following Friedrich inequality for each $k\in\mathbb{Z}$:
$$\int_{a_k(x)}^{b_k(x)}\left|\frac{\partial u}{\partial x_1}\right|^2\,\mathrm{d}x_1\ge\frac{\pi^2}{(b_k(x)-a_k(x))^2}\int_{a_k(x)}^{b_k(x)}|u|^2\,\mathrm{d}x_1,$$ 
whence
\begin{equation}\label{estimate1}\int_{\omega^\pm_x} \left|\frac{\partial u}{\partial x_1}\right|^2\,\mathrm{d}x_1\ge\sum_{k\in\mathbb{Z}:\,\, \pm k\geq   1}
\frac{\pi^2}{(b_k(x)-a_k(x))^2}\int_{a_k(x)}^{b_k(x)}|u|^2\,\mathrm{d}x_1,
\end{equation} 
where $\omega_x^\pm:=\omega_x\cap \mathbb{R}_\pm$.
Our aim is to establish a $\mathit{uniform}$ (with respect to $x$)   estimate from below for the right hand side of (\ref{estimate1}).
We will do this for $\omega^+_x$, for $\omega^-_x$ the arguments are similar.

At first we notice that on the way from $a_k(x)$ to $b_k(x)$ the cross-section $\omega$ turns 
by the angle which is not greater than $\pi$, i.e.
\begin{equation}\label{relation}
\theta(b_k(x))-\theta(a_k(x))\le\pi.
\end{equation}
This follows easily from \eqref{halfspace} and the definition of $a_k$ and $b_k$.
Then, using the mean value theorem and \eqref{3condition}, 
we obtain from \eqref{relation}:
\begin{gather}  
\label{distance}
b_k(x)-a_k(x)\le \frac{\pi}{\mathrm{min}_{s\in[a_k(x), b_k(x)]}\left(\dot{\theta}(s)\right)}\le\frac{\pi}{\dot{\theta}(a_k(x ))},\quad k\ge1.
\end{gather} 
Also from \eqref{relation} we get, using   
the monotonicity of $\theta$ (see \eqref{2condition}),
\begin{gather}\label{ineq1}
a_k(x)\ge\theta_+^{-1}(\theta(b_k(x))-\pi)\ge\theta_+^{-1}(\theta(x_1)-\pi),\quad x_1\in[a_k(x), b_k(x)],\quad k\ge1
\end{gather}
provided 
\begin{gather}\label{pi}
\theta(a_k(x))\ge\theta(0)+\pi.
\end{gather}
Condition \eqref{pi} is required to guarantee 
$$\theta(x_1)-\pi\in \mathrm{dom}(\theta_+^{-1}) =[\theta(0),\infty)\text{\quad as }x_1\in [a_k(x),b_k(x)].$$
Then, again using \eqref{3condition}, we conclude from \eqref{ineq1}:
\begin{equation}
\label{dot}
\dot{\theta}(a_k(x))\ge\dot{\theta}(\theta_+^{-1}(\theta(x_1)-\pi)),\quad x_1\in[a_k(x), b_k(x)],\quad k\ge1.
\end{equation}

Using inequalities (\ref{distance}) and \eqref{dot} we can estimate from below 
the summands in the right-hand side of (\ref{estimate1}) (recall, that now we consider its ``$+$'' part) which correspond to $k$ satisfying \eqref{dot}; the remaining summands we estimate by zero.
As a result we obtain
\begin{equation}\label{intomega+}
\int_{\omega^+_x}\left|\frac{\partial u}{\partial x_1}\right|^2\,\mathrm{d}x_1
\ge\sum_{k:\,\, \theta(a_k(x))\ge\theta(0)+\pi}\,\int_{a_k(x)}^{b_k(x)}\left(\dot{\theta}\left(\theta_+^{-1}(\theta(x_1)-\pi)\right)\right)^2\,
 |u|^2\,\mathrm{d}x_1.
\end{equation} 

We need more information on the location of the smallest 
$a_{k}(x)$ satisfying \eqref{pi}. 
Let $k_0$ be such that 
$\theta(a_{k_0}(x))\geq\theta(0)+\pi$, while 
$\theta(a_{k_0-1}(x))<\theta(0)+\pi$.
There are two possibilities: either

\begin{itemize}
\item all intervals $(a_k(x), b_k(x)),\,\,k<k_0$ belong to $(0, \theta_+^{-1}(\theta(0)+\pi)]$, or 

\item  $(a_k(x), b_k(x))\subset(0, \theta_+^{-1}(\theta(0)+\pi))$, $k\leq k_0-2$,  $a_{k_0-1}(x)<\theta_+^{-1}(\theta(0)+\pi)$, $b_{k_0-1}(x)>\theta_+^{-1}(\theta(0)+\pi)$. 
\end{itemize}
In the first case   $u$ vanishes on $[\theta_+^{-1}(\theta(0)+\pi), a_{k_0}(x)]$ and, therefore 
 one can replace $\displaystyle\sum_{k: \theta(a_k(x))\ge\theta(0)+\pi}\noindent\int_{a_k(x)}^{b_k(x)}$ in \eqref{intomega+} by  $\displaystyle\int_{\theta_+^{-1}(\theta(0)+\pi)}^\infty$. In the second case we use the following observation: 
on the way from $b_k(x)$ to $a_{k+1}(x)$ the cross-section $\omega$ turns 
on the angle which is larger than $\pi$, i.e.
$\theta(a_{k+1}(x))-\theta(b_k(x))\ge\pi.$
Therefore 
$$\theta(a_{k_0}(x))\ge\theta(b_{k_0-1}(x))+\pi\ge\theta(0)+2\pi,$$
while in view of (\ref{relation})
$$\theta(b_{k_0-1}(x))\le\theta(a_{k_0-1}(x))+\pi\le\theta(0)+2\pi.$$
Consequently, in the second case the right- hand side of (\ref{intomega+}) is not smaller than
$$\int_{\theta_+^{-1}(\theta(0)+2\pi)}^\infty \left(\dot{\theta}\left(\theta_+^{-1}(\theta(x_1)-\pi)\right)\right)^2 |u|^2\,\mathrm{d}x_1.$$
Summarising our conclusions in these two case we finally arrive at 
\begin{gather}
\label{final+}
\int_{\omega_x^+}\left|\frac{\partial u}{\partial x_1}\right|^2\,\mathrm{d}x_1\ge\int_{\theta_+^{-1}(\theta(0)+2\pi)}^\infty \left(\dot{\theta}\left(\theta_+^{-1}(\theta(x_1)-\pi)\right)\right)^2 |u|^2\,\mathrm{d}x_1,
\end{gather} 
with the function $f$  being defined by (\ref{f}). 

Using the same arguments we get similar estimate for   $\omega_x^-$: 
\begin{gather}
\label{final-}
\int_{\omega_x^-}\left|\frac{\partial u}{\partial x_1}\right|^2\,\mathrm{d}x_1\ge\int_{-\infty}^{\theta_-^{-1}(\theta(0)+2\pi)}\left(\dot{\theta}\left(\theta_-^{-1}(\theta(x_1)-\pi)\right)\right)^2 |u|^2\,\mathrm{d}x_1.
\end{gather} 

Taking into account that $\int_{\Omega} g(x_1,x_2,x_3) \mathrm{d}x_1 \mathrm{d}x_2  \mathrm{d}x_2 =\int_{\mathbb{R}^2}\left[\int_{\omega_x}g(x_1,x_2,x_3) \mathrm{d}x_1  \right]\mathrm{d} x$
for each $g\in L^1(\mathbb{R}^3)$ with $\mathrm{supp}(g)\subset \overline{\Omega}$, we get \eqref{int} from \eqref{final+}-\eqref{final-} and the definition of the function $f$. The lemma is proved.
\end{proof}
\medskip 
 
We come back to the proof the theorem.
Let us  fix  $0<\varepsilon<1$ and $\Lambda\ge0$. Given a function  $u\in \mathcal{H}_0^1(\Omega)$ the quadratic form of the Dirichlet Laplacian $-\Delta_\Omega^D$ can be represented as follows,
\begin{multline*}
\int_\Omega |\nabla u|^2\, \mathrm{d}x_1\,\mathrm{d}x_2\,\mathrm{d}x_3= \\=\varepsilon \int_{\Omega}\left|\frac{\partial u}{\partial x_1}\right|^2\,\mathrm{d}x_1\,\mathrm{d}x+\int_\Omega\left((1-\varepsilon)\left|\frac{\partial u}{\partial x_1}\right|^2+\left|\frac{\partial u}{\partial x_2}\right|^2+\left|\frac{\partial u}{\partial x_3}\right|^2\right)\, \mathrm{d}x_1\,\mathrm{d}x.
\end{multline*}
This together with (\ref{int}) yields  
\begin{multline}
\label{Lieb-Thirring}
\int_\Omega\left( |\nabla u|^2-\Lambda |u|^2\right)\, \mathrm{d}x_1\,\mathrm{d}x\ge\varepsilon\int_\Omega f(x_1)|u|^2\,\mathrm{d}x_1\,\mathrm{d}x\\ +\int_\Omega\left((1-\varepsilon)\left|\frac{\partial u}{\partial x_1}\right|^2+\left|\frac{\partial u}{\partial x_2}\right|^2+\left|\frac{\partial u}{\partial x_3}\right|^2-\Lambda |u|^2\right)\, \mathrm{d}x_1\,\mathrm{d}x\\\ge(1-\varepsilon)\int_\Omega\left(|\nabla u|^2+\frac{1}{1-\varepsilon}(\varepsilon f(x_1)-\Lambda)_-|u|^2\right)\, \mathrm{d}x_1\,\mathrm{d}x.
\end{multline}

We introduce the complement $\widehat{\Omega} :=\mathbb{R}^3 \backslash\overline{\Omega}$ and consider the functions of the form $h=u+v$ with $u\in \mathcal{H}_0^1(\Omega)$ and $v\in \mathcal{H}_0^1(\widehat{\Omega})$ which we may regard as functions in $\mathbb{R}^3$ extending them by zero to $\widehat{\Omega}$ and $\Omega$, respectively. Next we extend by zero the potential $\frac{1}{1-\varepsilon}(\varepsilon f(x_1)-\Lambda)_-$ to potential $V$ defined on the whole space $\mathbb{R}^3$. Then (\ref{Lieb-Thirring}) implies
\begin{multline}\label{bracketing}
\int_\Omega \left(|\nabla u|^2-\Lambda |u|^2\right)\, \mathrm{d}x_1\,\mathrm{d}x+\int_{\widehat{\Omega}} |\nabla v|^2\, \mathrm{d}x_1\,\mathrm{d}x\\\ge(1-\varepsilon)\int_{\mathbb{R}^3}(|\nabla h|^2+V |h|^2)\, \mathrm{d}x_1\,\mathrm{d}x.
\end{multline} 

The left-hand side in \eqref{bracketing} is the quadratic form corresponding to the operator $\left(-\Delta_D^\Omega-\Lambda\right) \oplus \left(-\Delta^{\widehat{ \Omega}}_D\right)$, while the right-hand one is the form associated with the operator $(1-\varepsilon)\left(-\Delta+V\right)$  on $L^2(\mathbb{R}^3)$.
Since  $-\Delta^{ \widehat{\Omega}}_D$ is a positive operator, 
we conclude from \eqref{bracketing}, using the minimax principle, that for each  $\sigma\geq 0$
\begin{equation}\label{operatorbound}
\mathrm{tr}\,\left(-\Delta_D^\Omega-\Lambda\right)_-^\sigma \le \,(1-\varepsilon)^\sigma\mathrm{tr}\left(-\Delta+V\right)_-^\sigma.
\end{equation}

Further we apply the
Lieb-Thirring inequality for the operator $-\Delta+V$. Recall, that it reads as 
\begin{gather}\label{LT}
\mathrm{tr}\,\left(-\Delta+V\right)_-^\sigma\le  L_{\sigma}\,\int_{\mathbb{R}^3}V_-^{\sigma+3/2}\,\mathrm{d}x_1\,\mathrm{d}x_2\,\mathrm{d}x_3,\ \sigma\geq 0
\end{gather} 
(for $\sigma=0$ \eqref{LT} is known as Cwikel-Lieb-Rozenblum inequality).
It was proved in \cite{LT76} for $\sigma>0$, and in \cite{R72} for $\sigma=0$.
Moreover, for $\sigma\ge 3/2$ the best constant $L_{\sigma}$ for which \eqref{LT} holds coincides
with $L_{\sigma,3}^{\rm cl}$ given by (\ref{semiclassical constant}) \cite{L97}. 

Using \eqref{LT} we 
obtain from \eqref{operatorbound}: 
\begin{multline}
\mathrm{tr}\,\left(-\Delta_D^\Omega-\Lambda\right)_-^\sigma\le(1-\varepsilon)^\sigma L_{\sigma}\,\int_{\mathbb{R}^3}V_-^{\sigma+3/2}\,\mathrm{d}x_1\,\mathrm{d}x\\ =\frac{L_{\sigma}}{(1-\varepsilon)^{3/2}} \int_{\Omega}\,\left(\varepsilon f(x_1)-\Lambda\right)_-^{\sigma+3/2}\, \mathrm{d}x_1\,\mathrm{d}x,\\=\frac{L_{\sigma}}{(1-\varepsilon)^{3/2}} \int_{\mathbb{R}}\,\int_{\omega(x_1)}\,\left(\varepsilon f(x_1)-\Lambda\right)_-^{\sigma+3/2}\, \mathrm{d}x_1\,\mathrm{d}x\\\label{eq2.7}=\frac{L_{\sigma}}{(1-\varepsilon)^{3/2}} \int_{\mathbb{R}}\,\left(\varepsilon f(x_1)-\Lambda\right)_-^{\sigma+3/2}\,|\omega(x_1)|\, \mathrm{d}x_1,
\end{multline}
where $\omega(x_1)$ is the image of $\omega$ after the rotation.
Since for every $x\in\mathbb{R}$, $|\omega(x_1)|=|\omega|$, inequality (\ref{eq2.7}) immediately implies \eqref{mainineq}.

Theorem~\ref{th1} is proved.

\section{Discussion\label{discussion}}

\subsection{Other choices of $\theta$\label{other}} 

Theorem~\ref{th1} (with accordingly modified function $f$) remains valid for some other choices of $\theta$.

Assume, for example, that $\theta(x_1)$ is a  continuously differentiable function 
satisfying \eqref{3condition} and additionally 
\begin{gather}\label{cond1}
\dot{\theta}(x_1)\ge 0,
\\\label{cond3}
\dot{\theta}(x_1) \text{ is increasing on } \mathbb{R}_+,\quad \dot{\theta}(x_1) \text{ is decreasing on } \mathbb{R}_-
\end{gather}(for example, one can choose $\theta(x_1)=\sum_{k=0}^m A_k x_1^{2k+1}$ with $m\in \mathbb{N}$, $A_k\ge 0$, $A_m\not=0$). Then Theorem~\ref{th1} remains valid with $f(x_1)$ being replaced by 
\begin{multline}\label{tildef}
 \widetilde{f}(x_1)=\left(\dot{\theta}\left(\theta^{-1}_+(\theta(x_1)-\pi)\right)\right)^2\,\chi_{\{x_1\ge\theta_+^{-1}(\theta(0)+2\pi)\}}(x_1) \\+\left(\dot{\theta}\left(\theta_-^{-1}(\theta(x_1)+\pi)\right)\right)^2\,\chi_{\{x_1\le\theta_-^{-1}(\theta(0)-2\pi)\}}(x_1).
\end{multline} 

Moreover, if $\theta$ satisfies (\ref{3condition}), \eqref{1condition}, (\ref{2condition}) (respectively,  (\ref{3condition}), (\ref{cond1}), (\ref{cond3})) only for $|x_1|\ge s_0 >0$ then Theorem~\ref{th1}  holds with $f(x_1)$ \eqref{f} being replaced by 
$$f(x_1) \chi_{\mathrm{max}\{s_0, \theta_+^{-1}(2\pi+\theta(s_0))\}}(x_1)+f(x_1)\chi_{\mathrm{min}\{-s_0, \theta_-^{-1}(2\pi+\theta(-s_0))\}}(x_1)$$ 
(respectively, with $\widetilde f(x_1)$ \eqref{tildef} being replaced by 
$$ \widetilde f(x_1) \chi_{\mathrm{max}\{s_0, \theta_+^{-1}(2\pi+\theta(s_0))\}}(x_1)+\widetilde f(x_1)\chi_{\mathrm{min}\{-s_0, \theta_-^{-1}(-2\pi+\theta(-s_0))\}}(x_1)\, ).$$

The proof for the above cases is almost the same as in the case (\ref{3condition}), \eqref{1condition}, (\ref{2condition}).

\subsection{Asymptotics of the obtained bound for large $\Lambda$\label{sec-asymp}}
 
The right-hand side  of  (\ref{mainineq}) looks rather cumbersome.  The situation becomes easier when $\Lambda\to\infty$. The following statement takes place.
  
\begin{proposition}
\label{th2}
The right-hand side of \eqref{mainineq} has the  asymptotics
\begin{equation}\label{lastresult}
(1+ {o}(1)) \frac{L_{\sigma}}{(1-\varepsilon)^{3/2}} |\omega| \int_{\mathbb{R}}(\varepsilon \dot{\theta}^2(x_1)-\Lambda)_-^{\sigma+3/2}\,\mathrm{d}x_1 \text{\, as }\Lambda\to\infty.
\end{equation}
\end{proposition}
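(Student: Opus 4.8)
The plan is to show that, as $\Lambda\to\infty$, the integral
$I(\Lambda):=\int_{\mathbb{R}}(\varepsilon f(x_1)-\Lambda)_-^{\sigma+3/2}\,\mathrm{d}x_1$ is asymptotically equal to
$\widetilde I(\Lambda):=\int_{\mathbb{R}}(\varepsilon\dot\theta^2(x_1)-\Lambda)_-^{\sigma+3/2}\,\mathrm{d}x_1$, since the factor $\frac{L_\sigma}{(1-\varepsilon)^{3/2}}|\omega|$ is common to both \eqref{mainineq} and \eqref{lastresult}. The heuristic is clear: by \eqref{f}, on $\{x_1\ge\theta_+^{-1}(\theta(0)+2\pi)\}$ the function $f(x_1)$ equals $\dot\theta(\theta_+^{-1}(\theta(x_1)-\pi))^2$, i.e.\ the value of $\dot\theta^2$ at the point $y(x_1):=\theta_+^{-1}(\theta(x_1)-\pi)$ which lags behind $x_1$ by a bounded ``angular amount'' $\pi$; since $\dot\theta\to\infty$, this lag becomes negligible relative to $\dot\theta(x_1)$ itself, so $f(x_1)/\dot\theta^2(x_1)\to1$. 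One then has to turn this pointwise comparison into an integral one.

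First I would make the change of variables $\alpha=\theta(x_1)$ on each half-line (legitimate by \eqref{3condition}, \eqref{2condition}, since $\theta$ is strictly monotone for large $|x_1|$), writing $x_1=\theta_+^{-1}(\alpha)$ with $\mathrm{d}x_1=\mathrm{d}\alpha/\dot\theta(\theta_+^{-1}(\alpha))$. Under this substitution the ``$+$'' part of $I(\Lambda)$ becomes
$\int(\varepsilon\dot\theta(\theta_+^{-1}(\alpha-\pi))^2-\Lambda)_-^{\sigma+3/2}\,\frac{\mathrm{d}\alpha}{\dot\theta(\theta_+^{-1}(\alpha))}$,
while the ``$+$'' part of $\widetilde I(\Lambda)$ becomes
$\int(\varepsilon\dot\theta(\theta_+^{-1}(\alpha))^2-\Lambda)_-^{\sigma+3/2}\,\frac{\mathrm{d}\alpha}{\dot\theta(\theta_+^{-1}(\alpha))}$.
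Now a further shift $\alpha\mapsto\alpha+\pi$ in the first integral lines up the integrands, leaving only a mismatch in the Jacobian: $\dot\theta(\theta_+^{-1}(\alpha))$ versus $\dot\theta(\theta_+^{-1}(\alpha+\pi))$. The ratio of these two is $g(\alpha):=\dot\theta(\theta_+^{-1}(\alpha+\pi))/\dot\theta(\theta_+^{-1}(\alpha))$, which is $\ge1$ by \eqref{1condition}, \eqref{2condition}, and which I claim tends to $1$ as $\alpha\to\infty$. Granting that claim, one sandwiches: the ``$+$'' part of $I(\Lambda)$ lies between $\min_{\alpha\ge\alpha_0}g(\alpha)^{-1}$ and $\sup_{\alpha\ge\alpha_0}g(\alpha)^{-1}$ times the ``$+$'' part of $\widetilde I(\Lambda)$ on the relevant range, where $\alpha_0$ can be taken as large as we like once $\Lambda$ is large (since the support of $(\varepsilon\dot\theta^2-\Lambda)_-$ recedes to infinity as $\Lambda\to\infty$, by \eqref{3condition}). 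Choosing $\alpha_0=\alpha_0(\Lambda)\to\infty$ slowly enough that it still lies outside the support, one gets that the ratio of the two ``$+$'' parts is $1+o(1)$; the same argument applies verbatim to the ``$-$'' parts, and adding them gives $I(\Lambda)=(1+o(1))\widetilde I(\Lambda)$. Multiplying through by the common constant yields \eqref{lastresult}.

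The main obstacle is establishing $g(\alpha)\to1$, i.e.\ that moving a fixed angular distance $\pi$ back along the tube changes the twisting velocity by a vanishing relative amount. This does \emph{not} follow from \eqref{3condition} and \eqref{1condition}, \eqref{2condition} alone in full generality — one can cook up an increasing $\dot\theta$ exploding to infinity in jumps so wild that $\dot\theta(\theta_+^{-1}(\alpha+\pi))/\dot\theta(\theta_+^{-1}(\alpha))$ does not converge. Most likely the intended reading is that Proposition~\ref{th2} is asserted for the concrete admissible class (polynomial $\theta$, or $\theta$ whose derivative grows regularly), for which $g(\alpha)\to1$ is an easy computation: if $\theta(x_1)\sim c x_1^{2m}$ then $\theta_+^{-1}(\alpha)\sim(\alpha/c)^{1/(2m)}$ and $\dot\theta(\theta_+^{-1}(\alpha))\sim 2mc^{1/(2m)}\alpha^{1-1/(2m)}$, so the ratio at $\alpha+\pi$ versus $\alpha$ is $(1+\pi/\alpha)^{1-1/(2m)}\to1$. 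I would therefore either state the proposition under a mild regularity hypothesis on $\dot\theta$ (e.g.\ that $\theta_+^{-1}(\alpha+\pi)/\theta_+^{-1}(\alpha)\to1$ together with local boundedness of $\dot\theta$, or regular variation of $\dot\theta\circ\theta_+^{-1}$), under which the sandwich argument above closes cleanly, or restrict to the polynomial examples already singled out in the paper. A secondary, purely bookkeeping point is handling the cutoff thresholds $\theta_+^{-1}(\theta(0)+2\pi)$ in \eqref{f}: these contribute only a bounded interval, on which the integrand of $(\varepsilon f-\Lambda)_-$ vanishes identically once $\Lambda>\varepsilon\max f$ over that interval, hence they are irrelevant for large $\Lambda$ and cause no difficulty.
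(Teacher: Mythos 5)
Your reduction of the problem to comparing $I(\Lambda)=\int_{\mathbb{R}}(\varepsilon f(x_1)-\Lambda)_-^{\sigma+3/2}\,\mathrm{d}x_1$ with $\widetilde I(\Lambda)=\int_{\mathbb{R}}(\varepsilon\dot\theta^2(x_1)-\Lambda)_-^{\sigma+3/2}\,\mathrm{d}x_1$ is correct, and so is your diagnosis of your own argument: the change of variables $\alpha=\theta(x_1)$ followed by the shift $\alpha\mapsto\alpha+\pi$ forces you to control the Jacobian ratio $g(\alpha)=\dot\theta(\theta_+^{-1}(\alpha+\pi))/\dot\theta(\theta_+^{-1}(\alpha))$, and $g(\alpha)\to1$ does \emph{not} follow from \eqref{3condition}, \eqref{1condition}, \eqref{2condition}. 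Since the proposition is stated for the full admissible class and not only for polynomial $\theta$, this is a genuine gap, not just a bookkeeping issue: your multiplicative sandwich cannot be closed under the stated hypotheses.

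The idea you are missing is to compare \emph{additively in $x_1$} rather than multiplicatively in $\alpha$. By the mean value theorem, $\theta_+^{-1}(\theta(x_1)-\pi)-x_1=-\pi/\dot\theta(c(x_1))$ for some intermediate point $c(x_1)$; since $\dot\theta$ is eventually bounded away from zero by \eqref{3condition} and the difference is bounded on compacts, there is a uniform constant $K$ with $\theta_+^{-1}(\theta(x_1)-\pi)\ge x_1-K$ on the relevant half-line (and symmetrically on the left). The monotonicity assumptions \eqref{1condition}, \eqref{2condition} then give the pointwise bound $f(x_1)\ge\dot\theta^2(x_1-K)$ for the upper estimate (and $f(x_1)\le\dot\theta^2(x_1+K)$ for the lower one), up to a bounded exceptional interval. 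Translating the integration variable by $K$, the integrals $I(\Lambda)$ and $\widetilde I(\Lambda)$ then differ only by contributions over intervals of fixed finite length, each bounded by $\mathrm{const}\cdot\Lambda^{\sigma+3/2}$ because $(\varepsilon\dot\theta^2-\Lambda)_-\le\Lambda$. Finally, these additive errors are negligible: $\widetilde I(\Lambda)\ge 2^{-\sigma-3/2}\Lambda^{\sigma+3/2}\,\mathrm{meas}\left\{x_1:\ |\dot\theta(x_1)|\le\sqrt{\Lambda/(2\varepsilon)}\right\}$, and this measure tends to infinity with $\Lambda$, so $\Lambda^{\sigma+3/2}=o(\widetilde I(\Lambda))$. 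This closes the argument under exactly the stated hypotheses, with no regular-variation assumption on $\dot\theta$. Your observation that the cutoff thresholds $\theta_\pm^{-1}(\theta(0)\pm2\pi)$ in \eqref{f} are harmless is correct; in this scheme they are absorbed into the same $O(\Lambda^{\sigma+3/2})$ boundary terms.
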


\begin{proof} Let us prove that the right hand side of (\ref{mainineq}) can be estimated from above by
the  expression of the form \eqref{lastresult}.

Due to the monotonicity of $\theta$ (see \eqref{2condition}) and $\dot{\theta}$ (see \eqref{1condition}) there exists   $s_0>0$ such that $\theta(x_1)>\theta(0)+\pi$ and
$\dot{\theta}(\theta(x_1)-\pi)\geq\alpha>0$ as $x_1>s_0$.
On each finite interval being contained in $[\theta_+^{-1}(\theta(0)+\pi),\infty$) the function $\theta_+^{-1}(\theta(x_1)-\pi)-x_1$ is bounded.
Moreover, applying the mean value theorem for $x_1>s_0$ one gets
$$\theta_+^{-1}(\theta(x_1)-\pi)-x_1
=\theta_+^{-1}(\theta(x_1)-\pi)-\theta_+^{-1}(\theta(x_1))=-\frac{\pi}{\dot{\theta}(c(x_1))},$$
where $c(x_1)\in(\theta(x_1)-\pi, \theta(x_1))$. Hence
$$K_1:=\mathrm{sup}_{\{x_1\ge\theta_+^{-1}(\theta(0)+\pi)\}}\left|\theta_+^{-1}(\theta(x_1)-\pi)-x_1\right|<\infty$$
Similarly, 
$$
K_2:=\mathrm{sup}_{\{x_1\le\theta_-^{-1}(\theta(0)+\pi)\}}\left|\theta_-^{-1}(\theta(x_1)-\pi)-x_1\right|<\infty.
$$

Let $K=\mathrm{max}\{K_1, K_2\}$. Then 
\begin{multline}
\label{long-est}
\int_{\mathbb{R}}(\varepsilon f(x_1)-\Lambda)_-^{\sigma+3/2}\,\mathrm{d}x_1
=\int_{\theta_+^{-1}(\theta(0)+2\pi)}^\infty(\varepsilon \dot{\theta}^2(\theta_+^{-1}(\theta(x_1)-\pi))-\Lambda)_-^{\sigma+3/2}\,\mathrm{d}x_1\\+\int_{-\infty}^{\theta_-^{-1}(\theta(0)+2\pi)}(\varepsilon \dot{\theta}^2(\theta_-^{-1}(\theta(x_1)-\pi))-\Lambda)_-^{\sigma+3/2}\,\mathrm{d}x_1\\\le\int_{\theta_+^{-1}(\theta(0)+\pi)}^\infty(\varepsilon \dot{\theta}^2(x_1-K)-\Lambda)_-^{\sigma+3/2}\,\mathrm{d}x_1+\int_{-\infty}^{\theta_-^{-1}(\theta(0)+2\pi)}(\varepsilon \dot{\theta}^2(x_1+K)-\Lambda)_-^{\sigma+3/2}\,\mathrm{d}x_1\\=\int_{\theta_+^{-1}(\theta(0)+2\pi)-K}^\infty(\varepsilon \dot{\theta}^2(x_1)-\Lambda)_-^{\sigma+3/2}\,\mathrm{d}x_1+\int_{-\infty}^{\theta_-^{-1}(\theta(0)+2\pi)+K}(\varepsilon \dot{\theta}^2(x_1)-\Lambda)_-^{\sigma+3/2}\,\mathrm{d}x_1\\\leq\int_{\mathbb{R}}(\varepsilon \dot{\theta}^2(x_1)-\Lambda)_-^{\sigma+3/2}\,\mathrm{d}x_1+\Lambda^{\sigma+3/2} \left(|\theta_+^{-1}(\theta(0)+2\pi)-K|+|\theta_-^{-1}(\theta(0)+2\pi)+K|\right).
\end{multline}

One has:
\begin{multline}\label{measure}\int_{\mathbb{R}}(\varepsilon \dot{\theta}^2(x_1)-\Lambda)_-^{\sigma+3/2}\,\mathrm{d}x_1=\int_{|\dot{\theta}(x_1)|\le\sqrt{\Lambda/( \varepsilon)}}(\varepsilon \dot{\theta}^2(x_1)-\Lambda)^{\sigma+3/2}\,\mathrm{d}x_1\\\ge
\int_{|\dot{\theta}(x_1)|\le\sqrt{\Lambda/(2\varepsilon)}}(\varepsilon \dot{\theta}^2(x_1)-\Lambda)^{\sigma+3/2}\,\mathrm{d}x_1 \ge\frac{\Lambda^{\sigma+3/2}}{2^{\sigma+3/2}}\mathrm{meas}
\left\{x_1\in\mathbb{R}:\ |\dot{\theta}(x_1)|\le\sqrt{\Lambda/(2\varepsilon)}\right\}.
\end{multline}
Evidently, the measure standing at the right-hand side of \eqref{measure} tends to
infinity as $\Lambda\to 0$. Therefore \eqref{measure} implies
\begin{gather}\label{o}
\Lambda^{\sigma+3/2} =o \left(\int_{\mathbb{R}}(\varepsilon \dot{\theta}^2(x_1)-\Lambda)_-^{\sigma+3/2}\,\mathrm{d}x_1\right) ,\quad\Lambda\to\infty.
\end{gather} 

Combining \eqref{long-est} and \eqref{o} we get the desired estimate
$$
\int_{\mathbb{R}}(\varepsilon f(x_1)-\Lambda)_-^{\sigma+3/2}\,\mathrm{d}x_1\le (1+ {o}(1))\int_{\mathbb{R}}(\varepsilon \dot{\theta}^2(x_1)-\Lambda)_-^{\sigma+3/2}\,\mathrm{d}x_1.$$

Using the same arguments one can prove that
the right hand side of (\ref{mainineq}) can be also estimated  \textit{from below } 
by the expression of the form \eqref{lastresult}.
The proof is similar: the chain of estimates \eqref{mainineq} remains valid
if we replace all ``$\leq$'' by ``$\geq$'', all 	``$\pm K$'' by ``$\mp K$'' and 
 ``$+\Lambda^{\sigma+3/2}$'' in the last line by	``$-\Lambda^{\sigma+3/2}$''. 

Proposition~\ref{th2} is proved. 
\end{proof}

\subsection{Comparison with the classical Berezin bound}

In this subsection we show that the obtained 
estimate \eqref{mainineq} can be used to improve the classical
Berezin bound for  \emph{bounded} twisted tubes with sufficiently
large rotation velocity
 in the regime
$\Lambda\ll N$, where $N$ is the length of the tube.

Let $\Omega$ be a twisted tube considered in Section~\ref{sec-main}.
Additionally, we assume that its rotation velocity satisfies
\begin{gather} \label{theta-dot}
 |\dot\theta(x_1)| \geq  |x_1|.
\end{gather}
Combining (\ref{mainineq}) and \eqref{lastresult} and taking into account \eqref{theta-dot} one gets for large $\Lambda$: 
\begin{multline} \label{berezin comparison}
\mathrm{tr}\left(-\Delta_D^{\Omega }-\Lambda\right)_-^\sigma  \le(1+\overline{o}(1))\frac{L_\sigma}{(1-\varepsilon)^{3/2}} |\omega|\int_{\mathbb{R}}(\varepsilon\dot{\theta}^2(x_1)-\Lambda)^{\sigma+3/2}\,\mathrm{d}x_1\\ \le (1+\overline{o}(1))\frac{L_\sigma}{(1-\varepsilon)^{3/2}} |\omega|\Lambda^{\sigma+3/2}(\dot{\theta}_+^{-1}(\sqrt{\Lambda/\varepsilon})-\dot{\theta}_-^{-1}(\sqrt{\Lambda/\varepsilon}))\\\le 2 (1+\overline{o}(1))\frac{L_\sigma}{(1-\varepsilon)^{3/2} \sqrt{\varepsilon}} |\omega|\Lambda^{\sigma+2}.
\end{multline} 

Now, we consider the bounded twisted tube
$$\Omega_N:=\left\{(x_1,x_2,x_3)\in\Omega:\ 0<x_1<N\right\}.$$ 
For this tube the classical Berezin inequality \eqref{classical-berezin} reads
\begin{equation}\label{classical}
\mathrm{tr}\left(-\Delta_D^{\Omega_1}-\Lambda\right)_-^\sigma\le L_{\sigma, 3}^{\mathrm{cl}}\Lambda^{\sigma+3/2}|\Omega_N|=
L_{\sigma, 3}^{\mathrm{cl}}\Lambda^{\sigma+3/2}|\omega|N.
\end{equation} 
On the other hand, applying the Dirichlet bracketing technique, we get
$$(-\Delta_D^{\Omega_N})\oplus(-\Delta_D^{\Omega\setminus\overline{\Omega_N}})
\ge -\Delta_D^\Omega,$$
whence
\begin{gather*}
\mathrm{tr}\left(-\Delta_D^{\Omega_N}-\Lambda\right)_-^\sigma
\leq
\mathrm{tr}\left((-\Delta_D^{\Omega_N})\oplus(-\Delta_D^{\Omega\setminus\overline{\Omega_N}})-\Lambda\right)_-^\sigma 
\leq
\mathrm{tr}(-\Delta_D^\Omega-\Lambda)_-^\sigma.
\end{gather*} 
Thus the right-hand side of \eqref{berezin comparison} is also 
an upper bound  $\mathrm{tr}\left(-\Delta_D^{\Omega_N}-\Lambda\right)_-^\sigma$.

Finally, assume that $N=N(\lambda)$ and $\Lambda\ll N$ as $\Lambda\to \infty$. Then for large $\Lambda$
the right-hand side of \eqref{berezin comparison} gives much better extimate for  $\mathrm{tr}\left(-\Delta_D^{\Omega_N}-\Lambda\right)_-^\sigma$ than 
the  classical Berezin inequality \eqref{classical}.

\section*{Acknowledgments}

The work of D.B. is supported by the Czech Science Foundation (GA\v{C}R) within the project 17-01706S and the Czech-Austrian Grant CZ 02/2017 7AMBL7ATO22. 
The authors are grateful to anonymous referee for careful reading of the manuscript and for attracting our attention to new open questions. D.B. thanks D.Krej\v{c}i\v{r}\'{i}k for useful discussions.
A.K. is supported by the Austrian Science Fund
(FWF) under Project No. M 2310-N32.

\end{document}